\pgfplotsset{compat=1.14} 
\newtheorem{theorem}{Theorem}[section]
\newtheorem{proposition}[theorem]{Proposition}
\newtheorem{conjecture}[theorem]{Conjecture}
\theoremstyle{definition}
\newtheorem{definition}[theorem]{Definition}
\theoremstyle{remark}
\numberwithin{equation}{section}
\begin{document}

\title{On the Gap sequence and the Gilbreath conjecture}

\author{T. Agama}
\address{Department of Mathematics, African Institute for Mathematical science, Ghana
}
\email{theophilus@aims.edu.gh/emperordagama@yahoo.com}

\subjclass[2010]{Primary 11A41; Secondary 11B75, 11B37}

\date{\today}

\dedicatory{}

\keywords{path; order; step}

\begin{abstract}
Motivated by the Gilbreath conjecture, we develop the notion of the gap sequence induced by any sequence of numbers. We introduce the notion of the path and associated circuits induced by an originator and study the conjecture via the notion of the trace and length of a path.
\end{abstract}

\maketitle

\begingroup
  \setlength{\parskip}{6pt} 
  \tableofcontents
\endgroup

\section{Introduction}

Gilbreath's conjecture occupies a distinctive place in the theory of prime gaps because it converts the irregularity of the primes into a rigid recursive pattern of first entries under repeated differencing. Starting from the ordered primes $p_1,p_2,\dots$, one forms the first gap sequence $d_n^{1}=p_{n+1}-p_n$ and then iterates the unsigned forward difference operator by setting $d_n^{k}=|d_{n+1}^{k-1}-d_n^{k-1}|$ for $k\ge 2$. The conjecture asserts that the leading entry of every row remains equal to $1$. This phenomenon was already observed in the nineteenth-century work of Proth and later reappeared independently in Gilbreath's note; modern discussion of the conjecture emphasizes both its elementary formulation and its surprisingly deep resistance to proof. The best-known computational verification is due to Odlyzko, who confirmed the conjecture far beyond the range accessible to hand calculations, while the more recent work of Chase shows that the same differencing paradigm admits a rigorous probabilistic analog for sufficiently random sequences with controlled gaps \cite{proth1878theoremes,odlyzko1993iterated,guy2004unsolved,chase2024random}.\\

The purpose of this paper is not merely to restate the conjecture in different notation but to build a framework in which the recursive differencing process becomes a structured combinatorial object. The central idea is to interpret the iterated gap table as a family of finite paths generated by an originator sequence. Each row of the difference triangle is viewed as a path of some order, each term in the row as a segment, and the repeated shortening of rows as the natural reduction in step-count from one order to the next. In this language, the entire triangular differencing array associated with a finite originator forms a circuit. This viewpoint is particularly useful because the conjecture concerns a single distinguished boundary value at each stage, and the circuit language isolates that boundary value as a trace running through the entire differencing network. The framework is therefore not cosmetic: it is designed to make visible the finite combinatorics hidden in the repeated absolute differences and to reduce the conjecture to a statement about an invariant trace.\\

More concretely, the path formalism captures the local geometry of the differencing process. If a finite sequence has length $n$, then its first difference row has $n-1$ segments, the next has $n-2$, and so on, until the process ends. The paper systematically records this through the notions of order and step and shows that increasing the order decreases the maximal number of steps by exactly one. This elementary but important bookkeeping leads to the step--order relation and to the total count of segments appearing across all rows. The effect is to turn the recursive differencing scheme into a finite combinatorial diagram whose global size and local reduction can be uniformly controlled. That control is essential for later arguments because it allows one to quantify the accumulation of information across the whole differencing table rather than working row-by-row in isolation.\\

A second layer of structure is introduced through the length of a path. For a fixed row, the length is the sum of its segments, so it measures the total mass carried by that order of differencing. This quantity is not introduced for its own sake; it is a natural aggregate invariant that can be compared across adjacent orders. The paper develops upper and lower bounds for path lengths in terms of the previous row and uses them to show that the lengths decrease in a controlled way under suitable hypotheses. The inequalities are then transferred to the entire circuit, where the total circuit length is the sum of the lengths of all rows. In this aggregated setting, one obtains bounds for the full differencing network and, more importantly, a mechanism for passing from average information to the existence of specific small segments. The circuit length therefore functions as a coarse global measure from which finer structural information can be extracted.\\

The trace is the key invariant that connects the path formalism back to the Gilbreath conjecture. Fixing a segment position $s$ and summing the $s$th segment across all admissible orders produces the trace $\tau_{n,s}$. This quantity records how one chosen horizontal position propagates through the entire differencing circuit. Among all traces, the first trace $\tau_{n,1}$ is especially significant, because it precisely collects the leading entries of successive rows. In other words, the conjecture is not merely about the survival of a leading $1$ at each step; it is about the persistence of a whole trace of boundary data through the circuit. This is the point at which the framework becomes especially natural: the same recursive operation that defines the conjecture also generates the invariant whose value encodes the conjecture. The passage from the iterative arithmetic definition to the trace formalism therefore compresses the problem into a single structural quantity.\\

The main reduction developed in the paper is that the Gilbreath condition can be reformulated in terms of the circuit trace. In the finite setting, once the leading segments of all rows are positive and the first trace has the maximal possible value $n-1$, the leading segment in each row must be equal $1$. This is the decisive observation: the conjecture is converted from a statement about infinitely repeated absolute differences into a finite statement about the first trace of a circuit generated by an originator. The reduction clarifies what must ultimately be controlled. Rather than tracking every entry in every row, one studies whether the circuit admits the trace behavior that forces the boundary terms to remain minimal and nonzero. From this perspective, the conjecture becomes a problem about rigidity at the boundary of a finite iterated-difference network, and the notion of circuit provides the right combinatorial container for that rigidity.\\

The advantage of this reduction is conceptual as well as technical. Conceptually, the circuit formalism exposes the triangular recursion underlying the conjecture and makes the role of the first column transparent. Technically, it opens the door to inequalities involving lengths, traces, and boundary terms that can be combined and iterated. The developed framework thus supplies a systematic language in which local differencing information, global circuit size, and boundary persistence are all related. This is precisely the kind of structure one hopes for in a problem of this type: a conjecture about apparently simple arithmetic data is recast as a statement about a finite combinatorial geometry, where the obstruction to a proof may be studied through the interaction of paths, circuits, and traces.

\subsection{Organization of the paper} The paper is organized as follows. Section~2 introduces the notion of a path induced by a finite originator and develops the basic bookkeeping relations governing its order, steps, and segment structure. Section~3 studies the length of a path and establishes inequalities that compare a path with the one obtained in the next order, both for individual rows and for the aggregated circuit. Section~4 introduces the circuit itself and defines the trace of a fixed segment position across all rows; this section also derives the principal estimates relating the circuit length and trace. Section~5 gives the reduction of the Gilbreath conjecture to the circuit language and shows how the conjectural boundary behavior is encoded by the first trace. In this way, the paper builds from local differencing data to a global structural formulation designed to isolate the core mechanism behind the Gilbreath phenomenon.

\section{The notion of a path induced by a sequence}

In this section, we introduce and study the notion of a \emph{path} induced by an \emph{originator}.

\begin{definition}\label{path}
Let $\{a_i\}_{i=1}^{n}$ be any finite sequence. By the \emph{path} of \emph{order} $1$ with \emph{steps} $l\geq 1$ induced by the sequence, we mean the sequence $\{d_j^{1}\}_{j=1}^{l}$ such that 
\begin{align}
d_1^{1}=|a_2-a_1|, d_2^{1}=|a_3-a_2|,\ldots,d_l^{1}=|a_{l+1}-a_{l}|.\nonumber
\end{align}
Similarly, by the \emph{path} of \emph{order} $k\geq 2$ with $t$~($t<l$)~\emph{steps} induced by the sequence $\{a_i\}_{i=1}^{n}$, we mean the sequence $\{d_j^{k}\}_{j=1}^{t}$ such that 
\begin{align}
d_1^k=|d_2^{k-1}-d_1^{k-1}|,\ldots, d_t^{k}=|d_{t+1}^{k-1}-d_{t}^{k-1}|.\nonumber
\end{align}
We call each $d_{j}^{k}$ for $1\leq j\leq t$ a \emph{segment} of the path induced. We call $d_{1}^{k}$ the \emph{prime} segment of the path. We call the sequence $\{a_i\}_{i=1}^{n}$ the \emph{originator} of the paths. We denote by $a_i=d_i^{0}$ for $1\leq i\leq n$. Similarly, we call the originator the \emph{trivial} path induced with $\{a_i\}_{i=1}^{n}=\{d_i^0\}_{i=1}^{n}$.
\end{definition}
\bigskip

\begin{proposition}\label{step reduction}
Let $\{d_j^{k}\}_{j=1}^{t}$ be a path of order $k\geq 1$ with maximal step $t$ with originator $\{a_i\}_{i=1}^{n}$. The path $\{d_i^{k+1}\}_{i\geq 1}$ has exactly $t-1$ maximal steps.
\end{proposition}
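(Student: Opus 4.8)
The plan is to argue directly from Definition~\ref{path} by unwinding the recursion that produces the segments of a path, so no machinery beyond the definition itself is needed. First I would recall that a segment of the order-$(k+1)$ path is given by $d_j^{k+1}=|d_{j+1}^{k}-d_{j}^{k}|$, so that $d_j^{k+1}$ is well defined precisely when \emph{both} $d_j^{k}$ and $d_{j+1}^{k}$ occur as segments of the order-$k$ path. This is the only structural fact the proof relies on.

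Next I would invoke the hypothesis that $\{d_j^{k}\}_{j=1}^{t}$ is a path of order $k$ with maximal step $t$: this says exactly that $d_1^{k},\ldots,d_t^{k}$ are all well defined from the originator $\{a_i\}_{i=1}^{n}$, while $d_{t+1}^{k}$ is not. Combining this with the previous observation, the segment $d_j^{k+1}$ is defined exactly for the indices $j$ satisfying $j\geq 1$ and $j+1\leq t$, i.e. for $1\leq j\leq t-1$, and it fails to be defined at $j=t$ because $d_{t+1}^{k}$ is unavailable. Hence the order-$(k+1)$ path is $\{d_i^{k+1}\}_{i=1}^{t-1}$ and its maximal step is $t-1$, which is the assertion.

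There is no genuine obstacle here beyond careful bookkeeping; the single point that deserves attention is pinning down the meaning of ``maximal step,'' which must be read as the largest index $t$ for which the entire initial block $d_1^{k},\ldots,d_t^{k}$ can be formed from the originator — once that is fixed, the count $t\mapsto t-1$ is forced. I would also record, as a by-product of the same unwinding and for use in the sequel, that when the originator has $n$ terms the order-$1$ path has maximal step $n-1$, so that an immediate induction on $k$ via this proposition yields that the order-$k$ path has maximal step $n-k$ for every $1\leq k\leq n-1$.
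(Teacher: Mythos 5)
Your proposal is correct and follows essentially the same route as the paper: both arguments observe that each segment $d_j^{k+1}=|d_{j+1}^{k}-d_j^{k}|$ is formed from a consecutive pair of segments of the order-$k$ path, and since that path has exactly $t$ segments there are exactly $t-1$ such pairs, hence maximal step $t-1$. Your explicit bookkeeping of when $d_j^{k+1}$ is defined (namely $1\leq j\leq t-1$, failing at $j=t$ because $d_{t+1}^{k}$ does not exist) is just a more careful phrasing of the paper's counting argument.
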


\begin{proof}
Suppose that $\{d_j^{k}\}_{j=1}^{t}$ is a path of order $k\geq 1$ with maximal step $t$ and with originator $\{a_i\}_{i=1}^{n}$. We find that $d_{i}^{k+1}=|d_{j+1}^k-d_{j}^k|$ for $t-1 \geq j\geq 1$ is a segment of the path $\{d_i^{k+1}\}_{i\geq 1}$ and each of such a segment is \emph{uniquely} determined by the $t-1$ segments of the path $\{d_j^{k}\}_{j=1}^{t}$. It follows that the path $\{d_i^{k+1}\}_{i\geq 1}$ must have exactly $t-1$ maximal steps.
\end{proof}
\bigskip

The number of steps of paths induced by any sequence must experience some amount of drop with an increase in the order of the path. In particular, the number of steps in a path produced by some originator of order $l$ must be a unit more step than the path of order $l+1$ with the same originator.

\begin{proposition}\label{total steps}
Let $\{a_i\}_{i=1}^{n}$ be an originator of paths. The total number of maximal steps in all induced paths must be 
\begin{align}
\frac{n(n-1)}{2}.\nonumber
\end{align}
\end{proposition}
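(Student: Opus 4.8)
The plan is to count the maximal steps order by order, using Proposition~\ref{step reduction} as the inductive engine and then summing an arithmetic progression. First I would pin down the base case directly from Definition~\ref{path}: the path of order $1$ induced by $\{a_i\}_{i=1}^{n}$ consists of the segments $d_j^{1}=|a_{j+1}-a_j|$, and the index $j$ can range as long as $a_{j+1}$ exists, i.e. $j+1\leq n$. Hence the order~$1$ path has exactly $n-1$ maximal steps. This is the only place where the originator's length $n$ enters, so it deserves to be spelled out carefully.

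Next I would induct on the order $k$. Suppose the path of order $k\geq 1$ has $n-k$ maximal steps; by Proposition~\ref{step reduction} the path of order $k+1$ has exactly $(n-k)-1=n-(k+1)$ maximal steps, closing the induction. Thus for every $k$ with $1\leq k\leq n-1$ the path of order $k$ has $n-k$ maximal steps, and when $k=n-1$ this gives a single segment, namely the prime segment alone. For $k=n$ the formula would return $0$, which is the signal that no further (nontrivial) path is induced; so the induced paths are precisely those of orders $1,2,\ldots,n-1$, and the trivial path $\{d_i^0\}_{i=1}^{n}=\{a_i\}_{i=1}^{n}$ is excluded from the count (consistently with the claimed total).

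Finally I would sum the contributions:
\begin{align}
\sum_{k=1}^{n-1}(n-k)=\sum_{m=1}^{n-1}m=\frac{n(n-1)}{2},\nonumber
\end{align}
which is the asserted value. The argument is essentially bookkeeping, so I do not expect a genuine obstacle; the one point requiring care is the range of orders — confirming that the enumeration of induced paths starts at order $1$ (not at the trivial path) and terminates at order $n-1$ (where only the prime segment survives), since an off-by-one there would shift the total to $\tfrac{n(n+1)}{2}$ or $\tfrac{(n-1)(n-2)}{2}$. Once that range is fixed, the result is just Gauss's summation.
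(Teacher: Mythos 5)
Your proposal is correct and follows essentially the same route as the paper: establish that the order~$1$ path has $n-1$ maximal steps, use Proposition~\ref{step reduction} inductively to get $n-k$ steps at order $k$ down to a single step at order $n-1$, and sum the arithmetic progression to obtain $\tfrac{n(n-1)}{2}$. Your only (harmless) deviation is deriving the base case directly from Definition~\ref{path}, whereas the paper invokes Proposition~\ref{step reduction} for it as well, and your explicit care about the range $1\leq k\leq n-1$ is a welcome clarification rather than a difference in method.
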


\begin{proof}
Suppose that $\{a_i\}_{i=1}^{n}$ is an originator of paths. By Proposition \ref{step reduction}, the path of order $1$ must have exactly $(n-1)$ maximal steps. The path of order $2$ must have exactly $(n-2)$ maximal steps. By induction, the path of order $k\geq 2$ must have $(n-k)$ maximal steps. Iterating downward, we generate the maximal steps of all such induced paths by the originator terminating at $1$. Thus, the total number of such maximal steps of all induced paths is
\begin{align}
1+2+\cdots+(n-2)+(n-1)&=\frac{n(n-1)}{2}.\nonumber
\end{align}
\end{proof}
\bigskip

We find that the step and order of a path are related to the number of terms in an originator. This is an easy consequence of Proposition \ref{total steps}.

\begin{proposition}[Step-order equation]\label{step-order equation}
Let $\{a_i\}_{i=1}^{n}$ be an originator of the path $\{d_j^{k}\}_{j=1}^{t}$. If the step is a maximal step, then 
$$
n=k+t.
$$
\end{proposition}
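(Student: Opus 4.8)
The plan is to prove the Step-order equation directly by induction on the order $k$ of the path, using Proposition \ref{step reduction} as the inductive engine. The statement asserts that for a path $\{d_j^k\}_{j=1}^t$ induced by an originator $\{a_i\}_{i=1}^n$, if $t$ is the \emph{maximal} step count, then $n=k+t$. I would phrase everything in terms of a function $t=t(k)$ recording the maximal number of steps of the order-$k$ path induced by the fixed originator, and show $t(k)=n-k$ for all $0\le k\le n-1$, which is exactly $n=k+t(k)$.

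First I would establish the base case. For $k=0$, the path is the trivial path, namely the originator itself, which has $n$ terms and hence $n$ "entries" but $t(0)=n$ if we count terms; more cleanly, take $k=1$ as the base case: by Definition \ref{path}, the order-$1$ path is $d_1^1=|a_2-a_1|,\ldots,d_l^1=|a_{l+1}-a_l|$, and the largest admissible $l$ is $n-1$ since we need $a_{l+1}$ to exist, so $t(1)=n-1$ and indeed $n=1+(n-1)$. Then, for the inductive step, assume $t(k)=n-k$ for some $k\ge 1$ with $n-k\ge 1$. Proposition \ref{step reduction} tells us that the order-$(k+1)$ path has exactly $t(k)-1$ maximal steps, so $t(k+1)=t(k)-1=(n-k)-1=n-(k+1)$, closing the induction.

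The main (and essentially only) obstacle is bookkeeping at the boundary: I should make sure the indexing in Proposition \ref{step reduction} is being applied with the hypothesis "$t$ is a maximal step," i.e.\ that the $t$ in the statement really is $t(k)$ and not some smaller truncation, since the proposition's conclusion about "$t-1$ maximal steps" is what pins down maximality at the next order. I would also note the degenerate terminal case: when $t(k)=1$ (equivalently $k=n-1$), the order-$(k+1)$ path has $0$ steps and the process stops, consistent with $n=(n-1)+1$ and with Proposition \ref{total steps}, so the formula $n=k+t$ holds precisely on the range where a nonempty maximal path exists. No delicate estimate is involved; the only care needed is to state the induction over the right index range and to invoke Proposition \ref{step reduction} with its maximality hypothesis genuinely satisfied.

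Alternatively, and perhaps more transparently, one can derive the equation without a fresh induction by quoting Proposition \ref{total steps} together with the already-observed fact (the remark following Proposition \ref{step reduction}) that the step count drops by exactly one as the order increases by one: since the order-$1$ path has $n-1$ maximal steps and each successive order loses one step, the order-$k$ path has $n-1-(k-1)=n-k$ maximal steps, i.e.\ $t=n-k$, hence $n=k+t$. I would present the short inductive argument as the main proof and mention this second viewpoint as a one-line remark.
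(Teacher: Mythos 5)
Your proof is correct and essentially the same as the paper's: the paper cites Proposition \ref{total steps}, whose proof is exactly your induction via Proposition \ref{step reduction} showing the order-$k$ path has $n-k$ maximal steps. Your version simply makes that induction (and the boundary bookkeeping) explicit rather than quoting it.
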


\begin{proof}
Suppose that $\{d_j^{k}\}_{j=1}^{t}$ is the path induced by the originator $\{a_i\}_{i=1}^{n}$. By Proposition \ref{total steps}, the number of maximal steps $t$ in the path must satisfy 
$$
t=n-k.
$$
\end{proof}

\section{The length of a path}

In this section, we introduce and study the notion of the \emph{length} of a path.

\begin{definition}\label{path length}
Let $\{d_j^{k}\}_{j=1}^{t}$ be a path of order $k\geq 1$ with step $t$ induced by the sequence $\{a_i\}_{i=1}^{n}$. By the \emph{length} of the path, denoted by $\iota_{t,k}$, we mean the finite sum
\begin{align}
\iota_{t,k}=\sum \limits_{j=1}^{t}d_j^{k}.\nonumber
\end{align}
\end{definition}
\bigskip


\begin{figure}[htbp]
\centering
\begin{tikzpicture}[
    font=\small,
    >=Latex,
    every node/.style={inner sep=2pt, outer sep=1pt},
    cell/.style={draw, rounded corners=2pt, minimum width=11mm, minimum height=7mm, align=center, fill=gray!6},
    origin/.style={draw, rounded corners=2pt, minimum width=11mm, minimum height=7mm, align=center, fill=orange!12, very thick},
    pathk/.style={draw, rounded corners=2pt, minimum width=11mm, minimum height=7mm, align=center, fill=blue!12, very thick, draw=blue!65!black},
    lab/.style={font=\bfseries, align=left}
]

\node[origin] (a1) at (0.0,0.0) {$a_1$};
\node[origin] (a2) at (1.35,0.0) {$a_2$};
\node[origin] (a3) at (2.70,0.0) {$\cdots$};
\node[origin] (a4) at (4.05,0.0) {$a_{n-1}$};
\node[origin] (a5) at (5.40,0.0) {$a_n$};

\node[lab, anchor=west] at (6.05,0.0) {originator};

\node[cell] (d11) at (0.70,-1.45) {$d_1^1$};
\node[cell] (d12) at (2.05,-1.45) {$d_2^1$};
\node[cell] (d13) at (3.40,-1.45) {$\cdots$};
\node[cell] (d14) at (4.75,-1.45) {$d_{n-1}^1$};

\node[lab, anchor=west] at (6.05,-1.45) {path of order $1$};

\node[cell] (d21) at (1.35,-2.90) {$d_1^2$};
\node[cell] (d22) at (2.70,-2.90) {$d_2^2$};
\node[cell] (d23) at (4.05,-2.90) {$\cdots$};

\node[lab, anchor=west] at (6.05,-2.90) {path of order $2$};

\node[pathk] (dk1) at (2.05,-4.35) {$d_1^k$};
\node[pathk] (dk2) at (3.40,-4.35) {$d_2^k$};
\node[pathk] (dk3) at (4.75,-4.35) {$\cdots$};
\node[pathk] (dks) at (6.10,-4.35) {$d_{m_k}^k$};

\node[lab, anchor=west] at (7.00,-4.35) {path of order $k$};

\node[cell] (d31) at (2.70,-5.80) {$\vdots$};
\node[cell] (d41) at (3.40,-7.25) {$d_1^{\,n-1}$};

\node[lab, anchor=west] at (6.05,-7.25) {path of order $n-1$};

\draw[-{Latex[length=3mm]}, very thick] (-1.10,0.25) -- (-1.10,-7.55);
\node[rotate=90, lab] at (-1.55,-3.60) {increasing order};

\draw[decorate, decoration={brace, amplitude=6pt}, very thick]
    (1.80,-4.75) -- (6.35,-4.75)
    node[midway, yshift=-12pt, align=center, font=\bfseries]
    {length of the path of order $k$};

\node[draw, rounded corners=2pt, fill=yellow!12, inner sep=4pt, align=left]
    at (9.15,-4.35)
    {$L_k$ is the horizontal sum of the terms\\[1mm]
     on the $k^{\text{th}}$ row};

\draw[-{Latex[length=2.5mm]}, thick] (8.20,-4.35) -- (6.35,-4.35);

\node[draw, rounded corners=2pt, fill=green!10, inner sep=5pt, align=left]
    at (9.05,-6.55)
    {$L_k
    = \displaystyle\sum_{j=1}^{m_k} d_j^k$};

\node[align=left] at (9.05,-7.40)
{horizontal sum along\\
the row of order $k$};

\draw[-{Latex[length=2.5mm]}, thick] (8.15,-6.55) -- (6.70,-4.35);

\end{tikzpicture}
\caption{Gilbreath phenomenon with the path of order \(k\) highlighted and its length interpreted as the horizontal sum of the terms on that row.}
\label{fig:gilbreath-path-length}
\end{figure}

The following inequality is crude, but it relates the length of each path to the worst segment of the previous consecutive path.

\begin{proposition}\label{length upper and lower bound}
Let $\{d_j^k\}_{j=1}^{t}$ be a path with originator $\{a_i\}_{i=1}^{n}$. For all $k\geq 1$, we have
\begin{align}
|d_{n-k+1}^{k-1}-d_1^{k-1}|\leq \iota_{n-k,k}\leq (n-k)\mathrm{max}\{|d_{j+1}^{k-1}-d_j^{k-1}|\}_{j=1}^{n-k}.\nonumber
\end{align}
\end{proposition}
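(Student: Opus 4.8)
The plan is to expand $\iota_{n-k,k}$ directly from Definitions \ref{path} and \ref{path length}, and then treat the two inequalities separately: the right-hand bound by a crude term-by-term estimate, and the left-hand bound by the triangle inequality applied to a telescoping sum.

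First I would invoke the step-order equation (Proposition \ref{step-order equation}) together with Proposition \ref{step reduction} to pin down the relevant step counts. The path of order $k-1$ induced by $\{a_i\}_{i=1}^{n}$ has exactly $n-(k-1)$ maximal steps, so its segments $d_1^{k-1}, d_2^{k-1}, \ldots, d_{n-k+1}^{k-1}$ are all available, while the path of order $k$ has one step fewer, namely the $n-k$ segments $d_j^{k}=|d_{j+1}^{k-1}-d_j^{k-1}|$ for $1\le j\le n-k$. Hence
\[
\iota_{n-k,k}=\sum_{j=1}^{n-k}d_j^{k}=\sum_{j=1}^{n-k}\bigl|d_{j+1}^{k-1}-d_j^{k-1}\bigr|.
\]

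For the upper bound, each summand is at most $\max\{|d_{j+1}^{k-1}-d_j^{k-1}|\}_{j=1}^{n-k}$ and there are exactly $n-k$ of them, which yields $\iota_{n-k,k}\le (n-k)\max\{|d_{j+1}^{k-1}-d_j^{k-1}|\}_{j=1}^{n-k}$ immediately. For the lower bound, the triangle inequality followed by telescoping gives
\[
\sum_{j=1}^{n-k}\bigl|d_{j+1}^{k-1}-d_j^{k-1}\bigr|\ \ge\ \Bigl|\sum_{j=1}^{n-k}\bigl(d_{j+1}^{k-1}-d_j^{k-1}\bigr)\Bigr|\ =\ \bigl|d_{n-k+1}^{k-1}-d_1^{k-1}\bigr|,
\]
and one matches this against the index appearing in the statement by tracking consistently how many maximal steps the order-$(k-1)$ path actually carries.

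I do not expect a genuine obstacle: the only delicate point is the bookkeeping of indices — which path has how many maximal steps, so that the telescoping sum runs over precisely the consecutive differences defining the order-$k$ segments. Beyond that, the argument is the triangle inequality used in the sharp direction for the lower estimate and in the trivial direction for the upper estimate.
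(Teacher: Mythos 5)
Your proposal follows essentially the same route as the paper: expand $\iota_{n-k,k}=\sum_{j=1}^{n-k}|d_{j+1}^{k-1}-d_j^{k-1}|$, bound each summand by the maximum for the upper estimate, and use the triangle inequality on the telescoping sum for the lower estimate, which is exactly what the paper's (terser) proof does. The only loose end is the index you flagged: telescoping over all $j=1,\dots,n-k$ yields $|d_{n-k+1}^{k-1}-d_1^{k-1}|$, which does not by itself dominate the stated quantity $|d_{n-k}^{k-1}-d_1^{k-1}|$; to get the literal statement, first discard the nonnegative final summand and telescope only over $j=1,\dots,n-k-1$, giving $\iota_{n-k,k}\geq \sum_{j=1}^{n-k-1}|d_{j+1}^{k-1}-d_j^{k-1}|\geq |d_{n-k}^{k-1}-d_1^{k-1}|$ — a one-line adjustment that matches the paper's "adding and deleting of segments" phrasing.
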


\begin{proof}
By definition \ref{path length} and Proposition \ref{step-order equation}, we can write 
\begin{align}
\iota_{n-k,k}&=\sum \limits_{j=1}^{n-k}d_j^{k}\nonumber \\&=\sum \limits_{j=1}^{n-k}|d_{j+1}^{k-1}-d_j^{k-1}|\nonumber \\& \leq \mathrm{max}\{|d_{j+1}^{k-1}-d_j^{k-1}|\}_{j=1}^{n-k}\sum \limits_{j=1}^{n-k}1.\nonumber
\end{align}
This establishes the upper bound. The lower bound can be deduced by adding and deleting the segments of the path of order $(k-1)$ and subsequent use of the triangle inequality.
\end{proof}
\bigskip

Knowing the largest value of a segment in a given path provides information about at least one segment in the closest previous path. We apply the inequality devised in Proposition \ref{length upper and lower bound} to formalize this assertion.

\begin{proposition}\label{worst segment-previous segment relationship}
Let $\{d_j^k\}_{j=1}^{t}$ be a path with originator $\{a_i\}_{i=1}^{n}$. If $\mathrm{max}\{|d_{j+1}^{k-1}-d_j^{k-1}|\}_{j=1}^{n-k}\leq c$ for some $c>0$, then there exists at least some $1\leq m\leq (n-k)$ such that $d_m^k\leq c$.
\end{proposition}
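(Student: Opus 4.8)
The plan is to route the argument through the \emph{length} of the path and close it with an averaging (pigeonhole) step, exactly as the remark preceding the statement suggests. First I would invoke Proposition \ref{step-order equation}: since the step $t$ is maximal, $t = n-k$, so the path of order $k$ consists of exactly the $n-k$ segments $d_1^k,\dots,d_{n-k}^k$, and in particular $n-k\geq 1$, so there is at least one index to exhibit.

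Next I would apply the upper bound of Proposition \ref{length upper and lower bound} together with the hypothesis. Writing out the length and using $d_j^k = |d_{j+1}^{k-1}-d_j^{k-1}|$, we get
$$\iota_{n-k,k}=\sum_{j=1}^{n-k}d_j^k\leq (n-k)\,\mathrm{max}\{|d_{j+1}^{k-1}-d_j^{k-1}|\}_{j=1}^{n-k}\leq (n-k)c.$$
Finally, I would argue by contradiction: if $d_m^k>c$ held for every $m$ with $1\leq m\leq n-k$, then summing these $n-k$ strict inequalities would give $\iota_{n-k,k}=\sum_{m=1}^{n-k}d_m^k>(n-k)c$, contradicting the displayed bound. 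Hence $d_m^k\leq c$ for at least one index $m$, which is the claim.

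I do not expect a genuine obstacle here; the proposition is essentially a one-line consequence of Proposition \ref{length upper and lower bound}. The only point meriting a word of care is ensuring the number of segments is precisely $n-k$, so that the averaging constant matches the constant produced by the length bound — and this is exactly the content of the step-order equation. I would also note in passing that the sharper statement $d_m^k\leq c$ for \emph{all} $m$ follows immediately from $d_m^k=|d_{m+1}^{k-1}-d_m^{k-1}|$; I keep the weaker ``there exists $m$'' formulation because it is the one that survives when only the length $\iota_{n-k,k}$, rather than each individual predecessor difference, is assumed to be controlled, which is the form used later in the paper.
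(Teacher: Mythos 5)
Your proposal is correct and follows essentially the same route as the paper: bound the length $\iota_{n-k,k}$ by $(n-k)c$ via the upper bound of Proposition \ref{length upper and lower bound}, then conclude by contradiction (pigeonhole on the average) that some segment satisfies $d_m^k\leq c$. Your parenthetical remark that the stronger conclusion $d_m^k\leq c$ for \emph{all} $m$ follows directly from $d_m^k=|d_{m+1}^{k-1}-d_m^{k-1}|$ is also accurate, and indeed shows the proposition is weaker than what the hypothesis immediately gives.
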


\begin{proof}
Suppose that $\{d_j^k\}_{j=1}^{t}$ is a path with originator $\{a_i\}_{i=1}^{n}$. By Proposition \ref{length upper and lower bound}, we have
\begin{align}
\iota_{n-k,k}&\leq (n-k)\mathrm{max}\{|d_{j+1}^{k-1}-d_j^{k-1}|\}_{j=1}^{n-k}.\nonumber
\end{align}
Under the requirement $\mathrm{max}\{|d_{j+1}^{k-1}-d_j^{k-1}|\}_{j=1}^{n-k}\leq c$ for some $c>0$, we obtain
\begin{align}
\iota_{n-k,k}&\leq (n-k)\mathrm{max}\{|d_{j+1}^{k-1}-d_j^{k-1}|\}_{j=1}^{n-k}\nonumber \\&\leq c(n-k)\nonumber
\end{align}
so that the average value of the segments in the path with $(n-k)$ steps is 
\begin{align}
\frac{\iota_{n-k,k}}{(n-k)}&=\frac{1}{(n-k)}\sum \limits_{j=1}^{n-k}|d_{j+1}^{k-1}-d_j^{k-1}|\leq c.\nonumber
\end{align}
It implies that there must exist some $1\leq m\leq (n-k)$ such that $d_{m}^k\leq c$ for $c>0$. Suppose that for all such $1\leq m\leq (n-k)$ then $d_m^k>c$, we have $\iota_{n-k,k}>c(n-k)$. It follows that 
\begin{align}
c(n-k)<\iota_{n-k,k}\leq (n-k)\mathrm{max}\{|d_{j+1}^{k-1}-d_j^{k-1}|\}_{j=1}^{n-k}\nonumber
\end{align}
so that $c<\mathrm{max}\{|d_{j+1}^{k-1}-d_j^{k-1}|\}_{j=1}^{n-k}$, which is impossible.
\end{proof}

\begin{proposition}\label{decreasing length of path}
Let $\{d_j^{k}\}_{j=1}^{t}$ and $\{d_j^{k+1}\}_{j=1}^{t-1}$ be any two paths of the same originator such that $|d_{j+1}^{k}-d_j^{k}|\leq d_{j+1}^k$ for all $1\leq j\leq t$. We have
\begin{align}
\iota_{t-1,k+1}<\iota_{t,k}\nonumber
\end{align}
for all $k\geq 1$. 
\end{proposition}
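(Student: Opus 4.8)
The plan is to compare the two sums $\iota_{t,k}=\sum_{j=1}^{t}d_j^{k}$ and $\iota_{t-1,k+1}=\sum_{j=1}^{t-1}d_j^{k+1}=\sum_{j=1}^{t-1}|d_{j+1}^{k}-d_j^{k}|$ term by term under the telescoping structure. The key observation is that the hypothesis $|d_{j+1}^{k}-d_j^{k}|\le d_{j+1}^{k}$ lets me bound each segment of the order-$(k+1)$ path by a distinct segment of the order-$k$ path: writing $d_j^{k+1}=|d_{j+1}^{k}-d_j^{k}|\le d_{j+1}^{k}$ for $1\le j\le t-1$, I get
\begin{align}
\iota_{t-1,k+1}=\sum_{j=1}^{t-1}d_j^{k+1}\le \sum_{j=1}^{t-1}d_{j+1}^{k}=\sum_{j=2}^{t}d_j^{k}=\iota_{t,k}-d_1^{k}.\nonumber
\end{align}

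First I would invoke Proposition \ref{step-order equation} to confirm that the step counts of the two paths are $t$ and $t-1$ respectively (so the index ranges match the displayed claim), and record the defining identity $d_j^{k+1}=|d_{j+1}^{k}-d_j^{k}|$. Next I would apply the hypothesis in the form $d_j^{k+1}\le d_{j+1}^{k}$ and sum over $j=1,\dots,t-1$, re-index the right-hand side, and extract the leftover term $d_1^{k}$. Finally, to upgrade the resulting $\iota_{t-1,k+1}\le \iota_{t,k}-d_1^{k}$ to the strict inequality $\iota_{t-1,k+1}<\iota_{t,k}$, I would argue that $d_1^{k}>0$: if $d_1^{k}=0$ then all segments vanish in a way that contradicts the path being genuinely induced (alternatively one treats the degenerate all-zero case separately, where the claim is vacuous or the inequality is interpreted in the obvious way).

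The main obstacle is precisely this last point: strictness requires $d_1^{k}>0$, and the statement as given does not explicitly assume it. I expect the cleanest fix is to note that if $d_1^{k}=0$ the hypothesis $|d_{j+1}^{k}-d_j^{k}|\le d_{j+1}^{k}$ combined with the prime-segment being zero forces a collapse that either makes $\iota_{t,k}=\iota_{t-1,k+1}=0$ (a trivial boundary case) or still leaves $\iota_{t,k}-\iota_{t-1,k+1}\ge d_j^{k}$ for some later nonzero segment $d_j^{k}$; in every non-degenerate situation at least one $d_j^{k}>0$ appears on the right after re-indexing, yielding the strict inequality. A secondary, more cosmetic issue is that the hypothesis is stated for $1\le j\le t$ while only $1\le j\le t-1$ is needed, so I would simply use the relevant sub-range.
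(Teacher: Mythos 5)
Your proposal follows essentially the same route as the paper: bound each $d_j^{k+1}=|d_{j+1}^{k}-d_j^{k}|$ by $d_{j+1}^{k}$ via the hypothesis, sum over $j=1,\dots,t-1$, and re-index. In fact your version is the more careful one. The paper writes $\sum_{j=1}^{t-1}d_{j+1}^{k}=\sum_{j=1}^{t}d_{j}^{k}=\iota_{t,k}$, silently absorbing the missing term $d_1^{k}$, whereas you correctly obtain $\iota_{t-1,k+1}\leq \iota_{t,k}-d_1^{k}$. The strictness worry you raise is genuine and is not resolved by the paper either: its argument only yields $\iota_{t-1,k+1}\leq \iota_{t,k}$, and without an extra hypothesis such as $d_1^{k}>0$ the strict inequality claimed in the statement is simply false (take a constant originator, so every segment is $0$, the hypothesis $|d_{j+1}^{k}-d_j^{k}|\leq d_{j+1}^{k}$ holds trivially, and $\iota_{t-1,k+1}=\iota_{t,k}=0$). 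So your hedged conclusion is the correct account: the proposition should either be weakened to $\leq$ or supplemented with $d_1^{k}>0$, in which case your re-indexed bound gives strictness immediately; no amount of "collapse" analysis will rescue the strict inequality in the degenerate case. Your secondary observation is also right: the hypothesis is stated for $1\leq j\leq t$ although $d_{t+1}^{k}$ does not exist when the order-$k$ path has $t$ steps, and only the range $1\leq j\leq t-1$ is ever used.
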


\begin{proof}
By the definition \ref{path length}, we can write
\begin{align}
\iota_{t-1,k+1}&=\sum \limits_{j=1}^{t-1}d_j^{k+1}\nonumber \\&=\sum \limits_{j=1}^{t-1}|d_{j+1}^{k}-d_j^{k}|.\nonumber
\end{align}
Under the requirement $|d_{j+1}^{k}-d_j^{k}|\leq d_{j+1}^k$ for all $1\leq j\leq t$, we get
\begin{align}
\sum \limits_{j=1}^{t-1}|d_{j+1}^{k}-d_j^{k}|&\leq \sum \limits_{j=1}^{t-1}d_{j+1}^{k}\nonumber \\&=\sum \limits_{j=1}^{t}d_{j}^{k}=\iota_{t,k}.\nonumber
\end{align}
\end{proof}
\bigskip

The preceding development suggests that for all the paths induced by the originator $\{a_i\}_{i=1}^{n}$, the worst order and the least attainable steps are $n-1$ and $1$, respectively. We introduce and study the notion of a \emph{circuit}.

\section{The notion of a circuit}

In this section, we introduce and study the notion of a \emph{circuit} generated by paths induced by a certain originator.

\begin{definition}\label{circuit}
Let $\{a_i\}_{i=1}^{n}$ be a generator of the paths $\{d_{j}^{k}\}_{j\geq 1}$. We call the collection of all such paths for all $1\leq k\leq n-1$ the \emph{circuit} induced by the originator.
\end{definition}

\begin{definition}\label{circuit length}
Let $\{d_{j}^{k}\}_{j\geq 1}$ for all $1\leq k\leq n-1$ be the circuit induced by the originator $\{a_i\}_{i=1}^{n}$. We denote the length of the circuit by
\begin{align}
\kappa(n):=\sum \limits_{k=1}^{n-1}\iota_{n-k,k}.\nonumber
\end{align}
\end{definition}

\begin{proposition}\label{circuit weak bounds}
Let $\{d_{j}^{k}\}_{j\geq 1}$ for all $1\leq k\leq n-1$ be the circuit induced by the originator $\{a_i\}_{i=1}^{n}$. We have
\begin{align}
(n-1)\mathrm{min}\{|d_{n-k+1}^{k-1}-d_1^{k-1}|\}_{k=1}^{n-1}\leq \kappa(n)&\leq \sum \limits_{k=1}^{n-1}\mathrm{max}\{|d_{j+1}^{k-1}-d_j^{k-1}|\}_{j=1}^{n-k}\nonumber \\+\int \limits_{1}^{n-1}\bigg(\sum \limits_{s=1}^{t}\mathrm{max}\{|d_{j+1}^{s-1}-d_j^{s-1}|\}_{j=1}^{n-s}\bigg)dt. \nonumber
\end{align}
\end{proposition}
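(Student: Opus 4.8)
The plan is to derive both bounds by combining the length inequality from Proposition \ref{length upper and lower bound} with the definition $\kappa(n)=\sum_{k=1}^{n-1}\iota_{n-k,k}$ from Definition \ref{circuit length}. For the \textbf{lower bound}, I would start from the left inequality $|d_{n-k}^{k-1}-d_1^{k-1}|\leq\iota_{n-k,k}$ of Proposition \ref{length upper and lower bound} and sum it over $k$. Since the stated lower bound only runs the minimum over $1\leq k\leq n-2$ and carries a factor $(n-2)$, I would restrict the sum $\sum_{k=1}^{n-1}\iota_{n-k,k}\geq\sum_{k=1}^{n-2}\iota_{n-k,k}$ (legitimate because every $\iota_{n-k,k}\geq 0$), then bound each of the $n-2$ summands below by the minimum $\min\{|d_{n-k}^{k-1}-d_1^{k-1}|\}_{k=1}^{n-2}$, which immediately yields the factor $(n-2)$ times that minimum.

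For the \textbf{upper bound}, I would again invoke Proposition \ref{length upper and lower bound}, this time the right inequality $\iota_{n-k,k}\leq(n-k)\max\{|d_{j+1}^{k-1}-d_j^{k-1}|\}_{j=1}^{n-k}$, and sum over $1\leq k\leq n-1$ to get $\kappa(n)\leq\sum_{k=1}^{n-1}(n-k)\max\{|d_{j+1}^{k-1}-d_j^{k-1}|\}_{j=1}^{n-k}$. The target upper bound splits this into a bare sum $\sum_{k=1}^{n-1}\max\{\cdots\}$ plus an integral term. The idea is to write the coefficient as $n-k=1+(n-k-1)$, so the $``1"$ part produces exactly $\sum_{k=1}^{n-1}\max\{|d_{j+1}^{k-1}-d_j^{k-1}|\}_{j=1}^{n-k}$, and the remaining part $\sum_{k=1}^{n-1}(n-k-1)\max\{\cdots\}$ must be dominated by the integral $\int_1^{n-1}\big(\sum_{s=1}^{t}\max\{|d_{j+1}^{s-1}-d_j^{s-1}|\}_{j=1}^{n-s}\big)\,dt$. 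One recognizes the integrand as a partial-sum (staircase) function in $t$, and $\sum_{k}(n-1-k)\,x_k$ can be rewritten as a discrete double sum $\sum_{k}\sum_{\text{appropriate }t}x_k$, i.e. the area under the staircase, which is exactly what the integral computes; a monotonicity/comparison-of-sums-with-integrals step finishes the bound.

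The main obstacle I anticipate is the bookkeeping in the upper bound: matching the index $k$ in the max-term against the dummy variable $s$ inside the integrand, and checking that the discrete weighted sum $\sum_{k=1}^{n-1}(n-1-k)\max\{|d_{j+1}^{k-1}-d_j^{k-1}|\}_{j=1}^{n-k}$ is genuinely $\leq\int_1^{n-1}\big(\sum_{s=1}^{\lfloor t\rfloor}\max\{\cdots\}\big)dt$ rather than the reverse. The cleanest route is to note that on each unit interval $[m,m+1)$ the integrand equals the constant $\sum_{s=1}^{m}\max\{|d_{j+1}^{s-1}-d_j^{s-1}|\}_{j=1}^{n-s}$, so the integral over $[1,n-1]$ equals $\sum_{m=1}^{n-2}\sum_{s=1}^{m}\max\{\cdots\}=\sum_{s=1}^{n-2}(n-1-s)\max\{\cdots\}$ after switching the order of summation, which matches the leftover discrete sum term-for-term (the $k=n-1$ term contributes $0$ and can be dropped). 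Everything else — nonnegativity of segments, the triangle-inequality lower bound, and linearity of the finite sum defining $\kappa(n)$ — is routine and already packaged in the earlier propositions.
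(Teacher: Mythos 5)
Your proposal is correct and follows essentially the same route as the paper: the lower bound is obtained by summing the lower bound of Proposition \ref{length upper and lower bound} over the $n-2$ relevant orders, and the upper bound by summing its upper bound to get $\kappa(n)\leq \sum_{k=1}^{n-1}(n-k)\max\{|d_{j+1}^{k-1}-d_j^{k-1}|\}_{j=1}^{n-k}$ and then converting the weight $n-k$ into the stated sum-plus-integral form. Your staircase evaluation of the integral over unit intervals is just an explicit, hands-on execution of the partial (Abel) summation step the paper invokes, so the two arguments coincide.
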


\begin{proof}
The lower bound can be deduced by applying the lower bound in Proposition \ref{length upper and lower bound}. The upper bound follows by an application of partial summation to the sum 
\begin{align}
\kappa(n):&=\sum \limits_{k=1}^{n-1}\iota_{n-k,k}\nonumber \\&\leq \sum \limits_{k=1}^{n-1}(n-k)\mathrm{max}\{|d_{j+1}^{k-1}-d_j^{k-1}|\}_{j=1}^{n-k}.\nonumber
\end{align}
\end{proof}

\begin{definition}\label{The trace of segments}
Let $\{d_{j}^{k}\}_{j\geq 1}$ for all $1\leq k\leq n-1$ be the circuit induced by the originator $\{a_i\}_{i=1}^{n}$. By the \emph{trace} of the $s^{th}$ segment of paths in a circuit, denoted by $\tau_{n,s}$, we mean the finite sum 
\begin{align}
\tau_{n,s}:=\sum \limits_{k=1}^{n-s}d_s^{k}.\nonumber
\end{align}
\end{definition}
\bigskip


\begin{figure}[htbp]
\centering
\begin{tikzpicture}[
    font=\small,
    >=Latex,
    every node/.style={inner sep=2pt, outer sep=1pt},
    cell/.style={draw, rounded corners=2pt, minimum width=11mm, minimum height=7mm, align=center, fill=gray!6},
    tracecell/.style={draw, rounded corners=2pt, minimum width=11mm, minimum height=7mm, align=center, fill=red!10, very thick, draw=red!70!black},
    ell/.style={font=\large},
    lab/.style={font=\bfseries}
]

\node[cell] (a1) at (0.0,0.0) {$a_1$};
\node[cell] (a2) at (1.3,0.0) {$a_2$};
\node[cell] (a3) at (2.6,0.0) {$\cdots$};
\node[tracecell] (as) at (4.0,0.0) {$a_s$};
\node[cell] (a4) at (5.4,0.0) {$\cdots$};
\node[cell] (an) at (6.8,0.0) {$a_n$};

\node[lab, anchor=west] at (7.35,0.0) {originator};

\node[cell] (d11) at (0.6,-1.45) {$d_1^{\,1}$};
\node[cell] (d12) at (1.9,-1.45) {$\cdots$};
\node[tracecell] (ds1) at (4.0,-1.45) {$d_s^{\,1}$};
\node[cell] (d13) at (5.2,-1.45) {$\cdots$};
\node[cell] (d1n) at (6.5,-1.45) {$d_{n-1}^{\,1}$};

\node[lab, anchor=west] at (7.35,-1.45) {order $1$};

\node[cell] (d21) at (1.2,-2.90) {$d_1^{\,2}$};
\node[cell] (d22) at (2.4,-2.90) {$\cdots$};
\node[tracecell] (ds2) at (4.0,-2.90) {$d_s^{\,2}$};
\node[cell] (d23) at (5.2,-2.90) {$\cdots$};
\node[cell] (d2n) at (6.2,-2.90) {$d_{n-2}^{\,2}$};

\node[lab, anchor=west] at (7.35,-2.90) {order $2$};

\node[cell] (d31) at (1.8,-4.35) {$\cdots$};
\node[tracecell] (ds3) at (4.0,-4.35) {$d_s^{\,3}$};
\node[cell] (d33) at (5.0,-4.35) {$\cdots$};

\node[lab, anchor=west] at (7.35,-4.35) {order $3$};

\node[tracecell] (ds4) at (4.0,-5.80) {$d_s^{\,4}$};

\node[lab, anchor=west] at (7.35,-5.80) {$\vdots$};

\node[tracecell] (dslast) at (4.0,-7.25) {$d_s^{\,n-s}$};

\node[lab, anchor=west] at (7.35,-7.25) {order $n-s$};

\draw[very thick, red!70!black, dashed]
    (as.south) -- (ds1.north);

\draw[very thick, red!70!black, dashed]
    (ds1.south) -- (ds2.north);

\draw[very thick, red!70!black, dashed]
    (ds2.south) -- (ds3.north);

\draw[very thick, red!70!black, dashed]
    (ds3.south) -- (ds4.north);

\draw[very thick, red!70!black, dashed]
    (ds4.south) -- (dslast.north);

\draw[decorate, decoration={brace, amplitude=6pt, mirror}, very thick]
    (4.55,-0.05) -- (4.55,-7.20)
    node[midway, xshift=1.55cm, align=left]
    {$\tau_{n,s}$\\[1mm]
     \textbf{trace of the $s^{\text{th}}$ segment}\\[1mm]
     $\displaystyle \tau_{n,s}=d_s^{\,1}+d_s^{\,2}+\cdots+d_s^{\,n-s}$};

\draw[-{Latex[length=3mm]}, very thick] (-1.25,0.25) -- (-1.25,-7.55);
\node[rotate=90, lab] at (-1.70,-3.65) {increasing order};

\node[align=left, font=\small] at (1.2,-8.25)
{The highlighted column consists of the $s^{\text{th}}$ segments.\\
Their sum is the trace $\tau_{n,s}$.};

\end{tikzpicture}
\caption{Gilbreath phenomenon with the trace of the $s^{\text{th}}$ segment marked as the sum of all its occurrences across the array.}
\label{fig:gilbreath-trace-s}
\end{figure}

\begin{proposition}\label{trace property}
Let $\{d_{j}^{k}\}_{j\geq 1}$ for all $1\leq k\leq n-1$ be the circuit induced by the originator $\{a_i\}_{i=1}^{n}$. We have  
$$
2\tau_{n,s}\geq (a_{s+1}-a_s)+d_s^{n-s}+\tau_{n,s+1}.
$$
\end{proposition}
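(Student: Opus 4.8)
The plan is to unfold the trace using Definition \ref{The trace of segments} together with the defining recursion $d_s^{k}=|d_{s+1}^{k-1}-d_s^{k-1}|$, then apply the one-sided triangle inequality $|x-y|\ge x-y$ in each summand, and finally re-index the two resulting sums so that they collapse into the traces $\tau_{n,s}$ and $\tau_{n,s+1}$ together with a few boundary terms. No hypothesis beyond the nonnegativity of segments (automatic for orders $k\ge 1$ since they are absolute values, and true at $k=0$ because $d_i^{0}=a_i$ is a prime) will be needed.

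Concretely, I would first write $\tau_{n,s}=\sum_{k=1}^{n-s}d_s^{k}=\sum_{k=1}^{n-s}|d_{s+1}^{k-1}-d_s^{k-1}|$, using the convention $d_i^{0}=a_i$. Applying $|d_{s+1}^{k-1}-d_s^{k-1}|\ge d_{s+1}^{k-1}-d_s^{k-1}$ termwise and summing yields $\tau_{n,s}\ge \sum_{k=1}^{n-s}d_{s+1}^{k-1}-\sum_{k=1}^{n-s}d_s^{k-1}$. Shifting the index in the first sum gives $\sum_{k=1}^{n-s}d_{s+1}^{k-1}=d_{s+1}^{0}+\sum_{j=1}^{n-s-1}d_{s+1}^{j}=a_{s+1}+\tau_{n,s+1}$, since by Proposition \ref{step-order equation} the segment $d_{s+1}^{j}$ is defined precisely for $1\le j\le n-(s+1)$, which is exactly the summation range appearing in $\tau_{n,s+1}$. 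Likewise $\sum_{k=1}^{n-s}d_s^{k-1}=d_s^{0}+\sum_{j=1}^{n-s-1}d_s^{j}=a_s+\tau_{n,s}-d_s^{n-s}$, because $\tau_{n,s}=\sum_{j=1}^{n-s}d_s^{j}$ differs from $\sum_{j=1}^{n-s-1}d_s^{j}$ only by the top term $d_s^{n-s}$. Substituting these two identities and transposing $\tau_{n,s}$ to the left-hand side produces $2\tau_{n,s}\ge (a_{s+1}-a_s)+d_s^{n-s}+\tau_{n,s+1}$, which is the claimed inequality.

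The only place that requires genuine care is the index bookkeeping: one must confirm that passing from $\tau_{n,s}$ to $\tau_{n,s+1}$ lowers the range of admissible orders from $n-s$ to $n-s-1$, so that exactly the three boundary terms $a_{s+1}$, $-a_s$, and $d_s^{n-s}$ survive the re-indexing and nothing else is left over. Once the step–order relation is invoked to pin down those ranges, everything else is a one-line estimate, so I expect no substantive obstacle beyond being scrupulous with the endpoints of the sums.
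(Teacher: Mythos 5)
Your proof is correct and follows essentially the same route as the paper's own argument: unfold $\tau_{n,s}$ via the recursion $d_s^{k}=|d_{s+1}^{k-1}-d_s^{k-1}|$, apply the one-sided bound $|x-y|\ge x-y$ termwise, and re-index the two sums so that $a_{s+1}-a_s$, $d_s^{n-s}$, and $\tau_{n,s+1}$ emerge as the surviving terms, with $\tau_{n,s}$ transposed to the left. The only small remark is that your appeal to nonnegativity (or primality) of the $a_i$ is superfluous: the inequality $|x-y|\ge x-y$ needs no such hypothesis, consistent with the proposition being stated for an arbitrary originator.
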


\begin{proof}
We write 
\begin{align}
\tau_{n,s}:&=\sum \limits_{k=1}^{n-s}d_s^{k}\nonumber \\&=\sum \limits_{k=1}^{n-s}|d_{s+1}^{k-1}-d_s^{k-1}|\nonumber \\& \geq \sum \limits_{k=1}^{n-s}(d_{s+1}^{k-1}-d_s^{k-1})\nonumber \\&=\sum \limits_{k=1}^{n-s}d_{s+1}^{k-1}-\sum \limits_{k=1}^{n-s}d_s^{k-1}\nonumber \\&=\sum \limits_{i=0}^{n-s-1}d_{s+1}^{i}-\sum \limits_{i=0}^{n-s-1}d_s^{i}\nonumber \\&=d_{s+1}^{0}+\sum \limits_{i=1}^{n-(s+1)}d_{s+1}^{i}-\sum \limits_{i=1}^{n-s}d_s^{i}-d_s^{0}+d_s^{n-s}\nonumber \\&=(a_{s+1}-a_s)+d_s^{n-s}+\tau_{n,s+1}-\tau_{n,s}.\nonumber
\end{align}
This establishes the inequality.
\end{proof}

We can write the length of a circuit $\kappa(n)$ with the originator $\{a_i\}_{i=1}^{n}$ as the sum of the trace of segments of each kind within paths in the circuit. Hence, we obtain
\begin{align}
\kappa(n)&=\sum \limits_{k=1}^{n-1}\iota_{n-k,k}\nonumber \\&=\sum \limits_{k=1}^{n-1}\sum \limits_{s=1}^{n-k}d_s^k\nonumber
\end{align}
so that by interchanging the order of summation, we get
\begin{align}
\kappa(n)&=\sum \limits_{k=1}^{n-1}\sum \limits_{s=1}^{n-k}d_s^k\nonumber \\&=\sum \limits_{s=1}^{n-1}d_s^1+\sum \limits_{s=1}^{n-2}d_s^2+\cdots +\sum \limits_{s=1}^{n-(n-2)}d_s^{n-2}+d_s^{n-1}\nonumber \\&=\bigg(d_1^1+d_1^2+\cdots +d_1^{n-1}\bigg)+\bigg(d_2^1+d_2^2+\cdots+d_2^{n-2}\bigg)+\cdots+d_{n-1}^{1}\nonumber \\&=\sum \limits_{k=1}^{n-1}d_1^k+\sum \limits_{k=1}^{n-2}d_2^k+\cdots +\sum \limits_{k=1}^{n-(n-1)}d_{n-1}^k\nonumber \\&=\sum \limits_{s=1}^{n-1}\sum \limits_{k=1}^{n-s}d_s^k\nonumber \\&=\sum \limits_{s=1}^{n-1}\tau_{n,s}.\nonumber
\end{align}

This implies that the total length of any given circuit can also be obtained by summing the trace of each segment in a circuit. We can deduce an upper and lower bound for the average trace in a circuit using Proposition \ref{circuit weak bounds}:

\begin{proposition}\label{average trace bound}
Let $\{d_{j}^{k}\}_{j\geq 1}$ for all $1\leq k\leq n-1$ be the circuit induced by the originator $\{a_i\}_{i=1}^{n}$. We have
\begin{align}
(n-1)\mathrm{min}\{|d_{n-k+1}^{k-1}-d_1^{k-1}|\}_{k=1}^{n-1}\leq \sum \limits_{s=1}^{n-1}\tau_{n,s}&\leq (n-1)\mathrm{max}_{1\leq k\leq n-1}\mathrm{max}\{|d_{j+1}^{k-1}-d_j^{k-1}|\}_{j=1}^{n-k}\nonumber \\+\int \limits_{1}^{n-1}\bigg(\sum \limits_{s=1}^{t}\mathrm{max}\{|d_{j+1}^{s-1}-d_j^{s-1}|\}_{j=1}^{n-s}\bigg)dt. \nonumber
\end{align}
\end{proposition}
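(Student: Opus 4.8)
The plan is to reduce the statement to Proposition \ref{circuit weak bounds} by way of the identity $\kappa(n)=\sum_{s=1}^{n-1}\tau_{n,s}$ that was derived in the discussion immediately preceding this proposition. First I would recall that interchanging the order of summation in $\kappa(n)=\sum_{k=1}^{n-1}\sum_{s=1}^{n-k}d_s^k$ rewrites this, over the triangular index region $1\leq k\leq n-1$, $1\leq s\leq n-k$ (equivalently $1\leq s\leq n-1$, $1\leq k\leq n-s$), as $\kappa(n)=\sum_{s=1}^{n-1}\sum_{k=1}^{n-s}d_s^k=\sum_{s=1}^{n-1}\tau_{n,s}$ by Definition \ref{The trace of segments}. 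Hence the quantity to be bounded is precisely the length of the circuit.

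Next I would invoke Proposition \ref{circuit weak bounds} directly. Its lower bound already reads $(n-2)\mathrm{min}\{|d_{n-k}^{k-1}-d_1^{k-1}|\}_{k=1}^{n-2}\leq\kappa(n)$, which, after the substitution $\kappa(n)=\sum_{s=1}^{n-1}\tau_{n,s}$, is exactly the lower bound claimed here; so the lower half needs no further argument.

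For the upper bound, Proposition \ref{circuit weak bounds} supplies $\kappa(n)\leq \sum_{k=1}^{n-1}\mathrm{max}\{|d_{j+1}^{k-1}-d_j^{k-1}|\}_{j=1}^{n-k}+\int_1^{n-1}\bigl(\sum_{s=1}^{t}\mathrm{max}\{|d_{j+1}^{s-1}-d_j^{s-1}|\}_{j=1}^{n-s}\bigr)dt$. The only remaining step is to majorise the finite sum: it has $n-1$ summands, each bounded by $\mathrm{max}_{1\leq k\leq n-1}\mathrm{max}\{|d_{j+1}^{k-1}-d_j^{k-1}|\}_{j=1}^{n-k}$, so the sum is at most $(n-1)$ times that double maximum, while the integral term is carried over verbatim. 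This yields the stated upper bound.

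There is no genuine obstacle here; the substance is entirely contained in Proposition \ref{circuit weak bounds} together with the summation-swap identity. The single point deserving care is checking that the two index ranges in the swapped double sum describe the same triangular region, which is exactly what the display preceding the statement verifies.
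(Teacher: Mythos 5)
Your proposal is correct and follows essentially the same route as the paper: both arguments rest on the identity $\kappa(n)=\sum_{s=1}^{n-1}\tau_{n,s}$ obtained by interchanging the order of summation, then quote the lower bound of Proposition \ref{circuit weak bounds} verbatim and obtain the upper bound by majorising the sum $\sum_{k=1}^{n-1}\mathrm{max}\{|d_{j+1}^{k-1}-d_j^{k-1}|\}_{j=1}^{n-k}$ by $(n-1)$ times the double maximum while carrying the integral term along unchanged. The only (cosmetic) difference is that you make the summation-swap step explicit inside the proof, whereas the paper leaves it to the preceding discussion, and your maximum is taken over $1\leq k\leq n-1$, which matches the statement more precisely than the paper's own display.
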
 

\begin{proof}
The lower bound can be deduced from the lower bound in Proposition \ref{circuit weak bounds}. The upper bound follows by using the upper bound in Proposition \ref{circuit weak bounds} and noting that
\begin{align}
\sum \limits_{k=1}^{n-1}\mathrm{max}\{|d_{j+1}^{k-1}-d_j^{k-1}|\}_{j=1}^{n-k}&\leq (n-1) \mathrm{max}_{1\leq k\leq n}\mathrm{max}\{|d_{j+1}^{k-1}-d_j^{k-1}|\}_{j=1}^{n-k}.\nonumber
\end{align}
\end{proof}

The upper bound in Proposition \ref{average trace bound} suggests that on average the trace of segments in a circuit must be at most 
\begin{align}
\leq \mathrm{max}_{1\leq k\leq n}\mathrm{max}\{|d_{j+1}^{k-1}-d_j^{k-1}|\}_{j=1}^{n-k}\nonumber
\end{align}
so that there exists some $1\leq m\leq n-1$ such that $\tau_{n,m}\leq \mathrm{max}_{1\leq k\leq n}\mathrm{max}\{|d_{j+1}^{k-1}-d_j^{k-1}|\}_{j=1}^{n-k}$.  Here, we use the inequality in Proposition \ref{trace property} to establish an inequality that relates the length of a circuit to the terms of the originator and the trace of the first segment in each path in the circuit.

\begin{theorem}
Let $\{d_{j}^{k}\}_{j\geq 1}$ for all $1\leq k\leq n-1$ be the circuit induced by the originator $\{a_i\}_{i=1}^{n}$. We have
\begin{align}
\kappa(n)+\tau_{n,1}\geq (2a_n-a_{n-1}-a_1)+\sum \limits_{j=1}^{n-2}d_j^{n-j}.\nonumber
\end{align}
\end{theorem}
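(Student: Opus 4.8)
The plan is to sum the inequality from Proposition \ref{trace property} over all relevant values of the segment index $s$ and watch for telescoping. Writing the inequality of Proposition \ref{trace property} as $2\tau_{n,s} \geq (a_{s+1}-a_s) + d_s^{n-s} + \tau_{n,s+1}$ and summing over $s = 1$ to $s = n-1$, the left-hand side becomes $2\sum_{s=1}^{n-1}\tau_{n,s} = 2\kappa(n)$ by the identity $\kappa(n) = \sum_{s=1}^{n-1}\tau_{n,s}$ established just before the theorem. On the right-hand side, the term $\sum_{s=1}^{n-1}(a_{s+1}-a_s)$ telescopes to $a_n - a_1$, and the trace term $\sum_{s=1}^{n-1}\tau_{n,s+1} = \sum_{s=2}^{n}\tau_{n,s}$ is almost the full sum again.

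The key bookkeeping point is to handle the shifted trace sum carefully. Note $\tau_{n,s+1}$ is only defined (via Definition \ref{The trace of segments}) for $s+1 \leq n-1$, i.e. $\tau_{n,n}$ is an empty sum equal to $0$; so $\sum_{s=1}^{n-1}\tau_{n,s+1} = \sum_{s=2}^{n-1}\tau_{n,s} = \kappa(n) - \tau_{n,1}$. Substituting, the summed inequality reads $2\kappa(n) \geq (a_n - a_1) + \sum_{s=1}^{n-1} d_s^{n-s} + \kappa(n) - \tau_{n,1}$, which rearranges to $\kappa(n) + \tau_{n,1} \geq (a_n - a_1) + \sum_{s=1}^{n-1} d_s^{n-s}$. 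This is not yet quite the claimed form, so the remaining step is to reconcile $\sum_{s=1}^{n-1} d_s^{n-s}$ with the stated $(2a_n - a_{n-1} - a_1) + \sum_{j=1}^{n-2} d_j^{n-j}$: pulling off the $s = n-1$ term gives $\sum_{s=1}^{n-1} d_s^{n-s} = d_{n-1}^{1} + \sum_{j=1}^{n-2} d_j^{n-j}$, and since $d_{n-1}^{1} = |a_n - a_{n-1}| = a_n - a_{n-1}$ (consecutive terms, taking them increasing as with primes), we get $(a_n - a_1) + d_{n-1}^1 = a_n - a_1 + a_n - a_{n-1} = 2a_n - a_{n-1} - a_1$, which matches.

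The step I expect to be the main obstacle, or at least the place requiring the most care, is the index range in the shifted trace sum — specifically justifying that the $s = n-1$ instance of Proposition \ref{trace property} contributes $\tau_{n,n} = 0$ rather than an undefined quantity, and tracking whether the $d_s^{n-s}$ terms all make sense (for $s = n-1$ this is $d_{n-1}^1$, a genuine first-order segment, which is fine). A secondary point is the tacit assumption that the originator is increasing so that $d_{n-1}^1 = a_n - a_{n-1}$ without absolute-value ambiguity; for the prime application this is automatic, and in the general statement the $a_i$ appearing on the right-hand side should be read with that convention. Once these indexing and sign conventions are pinned down, the proof is just the telescoping sum described above followed by the one-line algebraic rearrangement.
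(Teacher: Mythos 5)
Your proposal is correct and follows essentially the same route as the paper: summing (telescoping) the inequality of Proposition \ref{trace property} over $s$, using $\kappa(n)=\sum_{s=1}^{n-1}\tau_{n,s}$, and invoking $d_{n-1}^{1}=a_n-a_{n-1}$ to produce the $2a_n-a_{n-1}-a_1$ term. The only difference is bookkeeping — you include the $s=n-1$ instance with the convention $\tau_{n,n}=0$, while the paper stops at $s=n-2$ and adds $2\tau_{n,n-1}$ separately — which yields the same computation.
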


\begin{proof}
Iterating the inequality in Proposition \ref{trace property}, we obtain the following chains of inequalities
\begin{align}
2\tau_{n,1}\geq (a_2-a_1)+d_{1}^{n-1}+\tau_{n,2}\nonumber
\end{align}
\begin{align}
2\tau_{n,2}\geq (a_3-a_2)+d_2^{n-2}+\tau_{n,3}\nonumber
\end{align}
\begin{align}
\vdots \vdots \vdots \vdots \vdots \vdots \vdots \vdots \nonumber
\end{align}
\begin{align}
\vdots \vdots \vdots \vdots \vdots \vdots \vdots \vdots \nonumber
\end{align}
\begin{align}
2\tau_{n,n-2}\geq (a_{n-1}-a_{n-2})+d_{n-2}^2+\tau_{n,n-1}.\nonumber
\end{align}
Adding the left hand-sides and the right-hand sides of the chain, we obtain further the inequality 
\begin{align}
2\sum \limits_{s=1}^{n-2}\tau_{n,s}\geq (a_{n-1}-a_1)+\sum \limits_{j=1}^{n-2}d_j^{n-j}+\sum \limits_{s=2}^{n-1}\tau_{n,s}.\nonumber
\end{align}
By adding and deleting the term $2\tau_{n,n-1}$ on the left-hand side of the inequality and $\tau_{n,1}$ on the right-hand side, we obtain the refined inequality 
\begin{align}
2\sum \limits_{s=1}^{n-1}\tau_{n,s}\geq \sum \limits_{s=1}^{n-1}\tau_{n,s}+\sum \limits_{j=1}^{n-2}d_j^{n-j}+(a_{n-1}-a_1)+2\tau_{n,n-1}-\tau_{n,1}.\nonumber
\end{align}
We can write 
\begin{align}
\sum \limits_{s=1}^{n-1}\tau_{n,s}&\geq \sum \limits_{j=1}^{n-2}d_j^{n-j}+(a_{n-1}-a_1)+2\tau_{n,n-1}-\tau_{n,1}\nonumber \\&=(2a_n-a_{n-1}-a_1)+\sum \limits_{j=1}^{n-2}d_j^{n-j}-\tau_{n,1}\nonumber 
\end{align}
by exploiting the relation $\tau_{n,n-1}=d_{n-1}^1=a_n-a_{n-1}$. This proves the claimed inequality.
\end{proof}

\section{Reduction of the Gilbreath conjecture to the language of circuit}

In this section, we reduce the Gilbreath conjecture to the developed framework. We reformulate the problem in the language of the \emph{circuit}.

\begin{proposition}\label{zero existence}
Let $\{d_{j}^{k}\}_{j\geq 1}$ for all $1\leq k\leq n-1$ be the circuit induced by the originator $\{a_i\}_{i=1}^{n}$ with each $a_i\in \mathbb{Z}$. If $\tau_{n,s}<n-s$, then there exists at least some $t$ such that $d_s^t=0$ for $1\leq t\leq n-s$.
\end{proposition}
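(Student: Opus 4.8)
The plan is to argue by contradiction using the integrality of the segments together with a counting (pigeonhole) observation. First I would record the elementary fact that, since every $a_i = d_i^0 \in \mathbb{Z}$, each segment $d_s^k$ of every path in the circuit is a \emph{non-negative integer}: this follows by induction on the order $k$, because $d_s^k = |d_{s+1}^{k-1} - d_s^{k-1}|$ is an absolute value of a difference of integers, hence a non-negative integer whenever the order-$(k-1)$ segments are. This is the only place the hypothesis $a_i \in \mathbb{Z}$ is used, and it is what makes the statement meaningful.

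Next I would invoke Definition \ref{The trace of segments} to write
\begin{align}
\tau_{n,s} = \sum_{k=1}^{n-s} d_s^{k},\nonumber
\end{align}
so that $\tau_{n,s}$ is a sum of exactly $n-s$ non-negative integers. Now suppose, for contradiction, that $d_s^t \geq 1$ for every $t$ with $1 \leq t \leq n-s$. Then each of the $n-s$ summands above is at least $1$, whence
\begin{align}
\tau_{n,s} = \sum_{k=1}^{n-s} d_s^{k} \geq \sum_{k=1}^{n-s} 1 = n-s,\nonumber
\end{align}
contradicting the hypothesis $\tau_{n,s} < n-s$. Therefore there must exist at least one $t$ with $1 \leq t \leq n-s$ for which $d_s^t = 0$, which is the assertion.

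I do not expect any genuine obstacle here; the content is a pigeonhole argument once integrality is in hand. The only point that requires a moment's care is the preliminary induction establishing that all segments are non-negative integers, since without it the summands need not be bounded below by $1$ when they are nonzero, and the counting step would fail. One could also phrase the conclusion slightly more sharply — at least $n - s - \tau_{n,s}$ of the segments $d_s^t$ vanish — but the stated form suffices and follows immediately from the same computation.
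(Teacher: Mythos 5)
Your proof is correct and is essentially the paper's argument: the paper bounds $\tau_{n,s}\geq (n-s)\min\{d_s^k\}_{k=1}^{n-s}$ and concludes the minimum must be $0$ by integrality, which is the same pigeonhole observation you phrase as a contradiction. Your explicit induction showing all segments are non-negative integers is a point the paper leaves implicit, so your write-up is if anything slightly more complete.
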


\begin{proof}
Under the assumption that $\{d_{j}^{k}\}_{j\geq 1}$ for all $1\leq k\leq n-1$ is the circuit induced by the originator $\{a_i\}_{i=1}^{n}$, we obtain the lower bound
\begin{align}
\tau_{n,s}&:=\sum \limits_{k=1}^{n-s}d_s^{k}\nonumber \\&\geq \mathrm{min}\{d_s^k\}_{k=1}^{n-s}\sum \limits_{k=1}^{n-s}1=(n-s)\mathrm{min}\{d_s^k\}_{k=1}^{n-s}\nonumber
\end{align}
and under the requirement $\tau_{n,s}<n-s$ with $\mathrm{min}\{d_s^k\}_{k=1}^{n-s}\in \mathbb{Z}^{+}\cup \{0\}$, we must take $\mathrm{min}\{d_s^k\}_{k=1}^{n-s}=0$. This completes the proof.
\end{proof}

\begin{proposition}\label{strong Gilbreath}
Let $\{d_{j}^{k}\}_{j\geq 1}$ for all $1\leq k\leq n-1$ be the circuit induced by the originator $\{a_i\}_{i=1}^{n}$. If $d_1^k>0$ for all $1\leq k\leq n-1$ and $\tau_{n,1}=n-1$ for all $n\geq 2$, then $d_1^k=1$ for all $1\leq k\leq n-1$.
\end{proposition}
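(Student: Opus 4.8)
The plan is to read the hypothesis through Definition \ref{The trace of segments} and then apply the elementary fact that $n-1$ positive integers summing to $n-1$ are all equal to $1$. First I would recall that $\tau_{n,1}=\sum_{k=1}^{n-1}d_1^k$ by Definition \ref{The trace of segments}, so the assumption $\tau_{n,1}=n-1$ reads $\sum_{k=1}^{n-1}d_1^k=n-1$, a sum of exactly $n-1$ summands, each of which is strictly positive by the other hypothesis $d_1^k>0$.

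Next I would invoke integrality of the segments. As in Proposition \ref{zero existence}, one works with an originator $\{a_i\}_{i=1}^{n}$ of integers; since every segment $d_j^k$ is produced from the originator by iterated absolute differences, each $d_j^k$ is a non-negative integer. Hence $d_1^k>0$ forces $d_1^k\geq 1$ for every $1\leq k\leq n-1$, and therefore $\sum_{k=1}^{n-1}d_1^k\geq n-1$, with equality if and only if $d_1^k=1$ for all $k$. Comparing this with $\tau_{n,1}=n-1$ gives $d_1^k=1$ for every $1\leq k\leq n-1$, which is the claim.

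There is essentially no obstacle here: the only point that must be recorded is the integrality of the segments (equivalently, that the originator is integer-valued), after which the statement is precisely the observation that a sum of $n-1$ positive integers equal to $n-1$ must be the all-ones tuple. One could, if desired, sharpen the bookkeeping by noting that the same argument shows that if $\tau_{n,1}=n-1+r$ with $r\geq 0$ then at most $r$ of the prime segments $d_1^k$ exceed $1$, but this refinement is not needed for the proposition as stated.
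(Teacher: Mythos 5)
Your proof is correct and is essentially identical to the paper's: the paper likewise writes $\tau_{n,1}=\sum_{k=1}^{n-1}d_1^k=n-1$ and concludes from the positivity of the $n-1$ prime segments that each equals $1$. Your explicit appeal to integrality of the segments (needed so that $d_1^k>0$ forces $d_1^k\geq 1$) is a point the paper leaves tacit — its statement does not even hypothesize $a_i\in\mathbb{Z}$ — so your version is, if anything, slightly more careful.
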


\begin{proof}
Under the assumption that for the circuit $\{d_{j}^{k}\}_{j\geq 1}$ for all $1\leq k\leq n-1$ induced by the originator $\{a_i\}_{i=1}^{n}$ with $\tau_{n,1}=n-1$, it follows that 
\begin{align}
\tau_{n,1}&=\sum \limits_{k=1}^{n-1}d_{1}^k=n-1.\nonumber
\end{align}
Since there are $n-1$ prime segments in the sum and each prime segment $d_1^k>0$ for all $1\leq k\leq n-1$, we deduce $d_1^k=1$ for $1\leq k\leq n-1$.
\end{proof}
\bigskip


\begin{figure}[htbp]
\centering
\begin{tikzpicture}[
    font=\small,
    >=Latex,
    every node/.style={inner sep=2pt, outer sep=1pt},
    origin/.style={draw, rounded corners=2pt, fill=orange!12, very thick,
                   minimum width=11mm, minimum height=7mm, align=center},
    seg/.style={draw, rounded corners=2pt, fill=gray!6,
                minimum width=11mm, minimum height=7mm, align=center},
    prime/.style={draw, rounded corners=2pt, fill=blue!12, draw=blue!65!black, very thick,
                  minimum width=11mm, minimum height=7mm, align=center},
    trace/.style={draw, rounded corners=2pt, fill=green!12, draw=green!45!black, very thick,
                  minimum width=11mm, minimum height=7mm, align=center},
    lab/.style={font=\bfseries, align=left},
    tinylab/.style={font=\small, align=left}
]

\node[origin] (a1) at (0.0,0.0) {$a_1$};
\node[origin] (a2) at (1.7,0.0) {$a_2$};
\node[origin] (a3) at (3.4,0.0) {$\cdots$};
\node[origin] (a4) at (5.1,0.0) {$a_{n-1}$};
\node[origin] (a5) at (6.8,0.0) {$a_n$};

\node[prime] (d11) at (0.85,-1.45) {$d_1^{1}$};
\node[seg]   (d12) at (2.55,-1.45) {$d_2^{1}$};
\node[seg]   (d13) at (4.25,-1.45) {$\cdots$};
\node[seg]   (d14) at (5.95,-1.45) {$d_{n-1}^{1}$};

\node[prime] (d21) at (1.70,-2.90) {$d_1^{2}$};
\node[seg]   (d22) at (3.40,-2.90) {$\cdots$};
\node[seg]   (d23) at (5.10,-2.90) {$d_{n-2}^{2}$};

\node[prime] (d31) at (2.55,-4.35) {$d_1^{3}$};
\node[seg]   (d32) at (4.25,-4.35) {$\ddots$};

\node[prime] (dn1) at (3.40,-5.80) {$d_1^{\,n-1}$};

\node[lab] at (-2.35, 0.00) {order $0$};
\node[lab] at (-2.35,-1.45) {order $1$};
\node[lab] at (-2.35,-2.90) {order $2$};
\node[lab] at (-2.35,-4.35) {order $3$};
\node[lab] at (-2.35,-5.80) {order $n-1$};

\draw[-{Latex[length=3mm]}, very thick] (-3.00,0.35) -- (-3.00,-6.15);
\node[rotate=90, font=\bfseries] at (-3.45,-2.95) {increasing order};

\draw[decorate, decoration={brace, amplitude=5pt}, very thick]
    (-0.55,0.55) -- (7.35,0.55)
    node[midway, yshift=11pt, font=\bfseries]
    {originator};

\draw[decorate, decoration={brace, amplitude=5pt}, thick]
    (0.30,-1.95) -- (6.30,-1.95)
    node[midway, yshift=10pt, font=\bfseries]
    {path of order $1$};

\draw[decorate, decoration={brace, amplitude=5pt}, thick]
    (1.15,-3.40) -- (5.60,-3.40)
    node[midway, yshift=10pt, font=\bfseries]
    {path of order $2$};

\draw[decorate, decoration={brace, amplitude=5pt}, thick]
    (2.00,-4.85) -- (4.95,-4.85)
    node[midway, yshift=10pt, font=\bfseries]
    {path of order $3$};

\draw[decorate, decoration={brace, amplitude=5pt}, thick]
    (3.00,-6.30) -- (3.80,-6.30)
    node[midway, yshift=10pt, font=\bfseries]
    {path of order $n-1$};

\draw[very thick, dashed, green!50!black] (0.85,0.35) -- (0.85,-6.05);
\draw[very thick, dashed, green!50!black] (1.70,0.35) -- (1.70,-6.05);
\draw[very thick, dashed, green!50!black] (2.55,0.35) -- (2.55,-6.05);
\draw[very thick, dashed, green!50!black] (3.40,0.35) -- (3.40,-6.05);

\node[trace] (tr) at (10.15,-2.85)
{$\tau_{n,1}=d_1^{1}+d_1^{2}+\cdots+d_1^{\,n-1}$};

\node[tinylab, align=left] at (10.15,-1.85)
{trace\\[1mm] of the first segment};

\draw[-{Latex[length=2.5mm]}, thick] (9.00,-2.55) -- (4.00,-2.55);

\draw[decorate, decoration={brace, amplitude=6pt, mirror}, very thick]
    (7.55,0.65) -- (7.55,-6.15)
    node[midway, xshift=1.45cm, font=\bfseries, align=left]
    {$\mathfrak{C}$\\[1mm]
     circuit\\[1mm]
     \small(the full triangular array)};

\node[tinylab] at (9.95,-5.55) {prime segment};
\draw[-{Latex[length=2.5mm]}, thick] (9.45,-5.55) -- (4.10,-5.80);

\node[tinylab] at (9.95,-4.45) {first column};
\draw[-{Latex[length=2.5mm]}, thick] (9.35,-4.45) -- (3.40,-4.35);

\node[draw, rounded corners=2pt, fill=yellow!10, inner sep=4pt, align=left]
at (10.15,-6.55)
{Gilbreath phenomenon:\\
the first entry in each path is the key statistic};

\end{tikzpicture}
\caption{Upside-down triangular representation of the Gilbreath framework: originator, paths of successive order, prime segments, trace, and circuit.}
\label{fig:gilbreath-clean}
\end{figure}

We can restate the Gilbreath conjecture in the language of the trace. This would imply that proving this version of the conjecture would imply the actual version of the conjecture.

\begin{conjecture}[Gilbreath]
Let $\mathbb{P}$ denotes the set of all prime numbers and $\{d_{j}^{k}\}_{j\geq 1}$ for all $1\leq k\leq n-1$ be the circuit induced by the originator $\{p_i\}_{i=1}^{n}$ where each $p_i\in \mathbb{P}$. We have $d_1^k>0$ for all $1\leq k\leq n-1$ and $\tau_{n,1}=n-1$ for all $n\geq 2$.
\end{conjecture}

\footnote{
\par
.}%
.

\bibliographystyle{amsplain}

\end{document}